\documentclass[12pt,reqno]{article}

\usepackage[usenames]{color}
\usepackage{amssymb}
\usepackage{amsmath}
\usepackage{amsthm}
\usepackage{amsfonts}
\usepackage{amscd}
\usepackage{graphicx}

\usepackage[colorlinks=true,
linkcolor=webgreen,
filecolor=webbrown,
citecolor=webgreen]{hyperref}

\definecolor{webgreen}{rgb}{0,.5,0}
\definecolor{webbrown}{rgb}{.6,0,0}

\usepackage{color}
\usepackage{fullpage}
\usepackage{float}

\setlength{\textwidth}{6.5in}
\setlength{\oddsidemargin}{.1in}
\setlength{\evensidemargin}{.1in}
\setlength{\topmargin}{-.1in}
\setlength{\textheight}{8.4in}

\DeclareMathOperator{\arctanh}{arctanh}
\DeclareMathOperator{\arccot}{arccot}

\begin{document}

\theoremstyle{plain}
\newtheorem{theorem}{Theorem}
\newtheorem{lemma}[theorem]{Lemma}
\newtheorem{proposition}[theorem]{Proposition}

\theoremstyle{definition}
\newtheorem{definition}[theorem]{Definition}
\newtheorem{corollary}[theorem]{Corollary}
\newtheorem{remark}[theorem]{Remark}
\newtheorem{example}[theorem]{Example}

\begin{center}
\vskip 1cm{\LARGE\bf
% Evaluation of some alternating series\\ involving the binomial coefficients $C(4n,2n)$\\in the denominator\\
Evaluation of some alternating series\\ involving the binomial coefficients $C(4n,2n)$\\
\vskip .1in } \vskip 1cm \large
Kunle Adegoke\\
Department of Physics and Engineering Physics\\
Obafemi Awolowo University\\
Ile-Ife, Nigeria\\
\href{mailto:adegoke00@gmail.com}{\tt adegoke00@gmail.com} \\
\vskip .25 in
Robert Frontczak\\
Independent Researcher\\
Reutlingen, Germany\\
\href{mailto:robert.frontczak@web.de}{\tt robert.frontczak@web.de}\\
\vskip .25 in
Taras Goy\\
Faculty of Mathematics and Computer Science\\
Vasyl Stefanyk Precarpathian National University \\
Ivano-Frankivsk, Ukraine\\
\href{mailto:taras.goy@pnu.edu.ua}{\tt taras.goy@pnu.edu.ua} \\
\end{center}

\vskip .4 in

\begin{abstract}
We evaluate in closed form several alternating infinite series involving the binomial coefficients
$C(4n,2n)$ and $C(4n+2,2n+1)$ in the denominator. One of our results generalizes an identity that
was obtained experimentally by Sprugnoli in 2006.
\end{abstract}

\vskip .1 in

\noindent 2020 {\it Mathematics Subject Classification}: Primary 40A30; Secondary 11B37, 11B39.

\vskip .05 in

\noindent \emph{Keywords:} Series; binomial coefficient; Fibonacci numbers; Lucas numbers; Catalan numbers; odd harmonic numbers.

\section{Motivation}

By using a bisection formula and a Computer Algebra System, R.~Sprugnoli \cite{sprugnoli06} evaluated the constant
\begin{equation*}
\sum_{n = 0}^\infty {( - 1)^n \binom{4n}{2n}^{ - 1} }\approx .84660943.
\end{equation*}
According to Sprugnoli, his result is a rather complicated formula, containing complex numbers; which he was however able to reduce to the following expression, containing only real quantities:
\begin{align}\label{sprug}
\begin{split}
\sum_{n = 0}^\infty  &( - 1)^n \binom{4n}{2n}^{ - 1}\\
&\hspace{-.7cm}=\frac{16}{17} + \frac{2\sqrt{34}}{289}\!
\left(\frac{2 (\sqrt {17}  - 2)}{\sqrt {\sqrt {17}  - 1}}\arctan\! \Big( \frac{\sqrt 2 }{\sqrt {\sqrt {17}  - 1}} \Big) +  \frac{\sqrt {17}  + 2}{\sqrt {\sqrt {17}  + 1} }\ln\!\Big( {\frac{{\sqrt {\sqrt {17}  + 1}  - \sqrt 2 }}{{\sqrt {\sqrt {17}  + 1}  + \sqrt 2 }}}\Big)\! \right)\!.
\end{split}
\end{align}

In this paper, among other things, we will generalize Sprugnoli's identity \eqref{sprug}.
In particular, we will show that
\begin{equation*}
\sum_{n = 1}^\infty  \frac{( - 1)^{n - 1}}{n^{2}}\binom{4n}{2n}^{ - 1}x^{2n}
 = 8\Big(\arctanh^2\! {\sqrt y } - \arctan^2\!  {\sqrt y }\Big),\qquad |x|\le 4,
\end{equation*}
where
\begin{equation*}
y = y(x) = \frac{\sqrt{x^2 + 16} - 4}{x}.
\end{equation*}
For $|x|\le 4$, we also have
\begin{equation*}
\sum_{n = 1}^\infty \frac{( - 1)^{n - 1}}{n} \binom{4n}{2n}^{ - 1}x^{2n}
= 4\sqrt y\,\,\frac{y^2 - 1}{y^2 + 1}\left( \frac{{\arctan {\sqrt y }}}{{y + 1}} + \frac{{\arctanh {\sqrt y }}}{y - 1} \right)
\end{equation*}
and
\begin{equation}
\begin{split}\label{eq.gof0o22}
\sum_{n = 1}^\infty ( - 1)^{n - 1} \binom{4n}{2n}^{- 1} x^{2n} &= \frac{4y^2}{(y^2  + 1)^2}
+ \left( \frac{{(y - 1)^3 }}{(y^2 + 1)^2 } - \frac{8(y - 1)y^2}{(y^2 + 1)^3} \right)\sqrt{y}\arctan {\sqrt y } \\
&\quad+\left( \frac{{(y + 1)^3 }}{(y^2 + 1)^2} - \frac{{8(y + 1)y^2}}{(y^2 + 1)^3} \right)\sqrt{y}\arctanh {\sqrt y }.
\end{split}
\end{equation}

Sprugnoli's result can be obtained by setting $x=1$ in \eqref{eq.gof0o22} in view of $
\arctanh x = \frac{1}{2}\ln \!\big( {\frac{1 + x}{1 - x}} \big)$.

Furthermore, evaluating the above series for carefully selected values of $x$ will produce additional interesting identities.
For instance, since $y(3)=1/3$, we immediately obtain
\begin{align*}
\sum_{n = 1}^\infty ( - 1)^{n - 1} \frac{ 9^n}{n^2}\binom{4n}{2n}^{ - 1} &= - \frac{{2\pi ^2 }}{9}+ 2\ln ^2\! \big( 2 + \sqrt 3 \big),\\
\sum_{n = 1}^\infty ( - 1)^{n - 1} \frac{9^n}{n}\binom{4n}{2n}^{ - 1}  &= - \frac{{2\sqrt 3\,\pi }}{15} + \frac{{4\sqrt 3 }}{5}\ln \big( 2+\sqrt 3\big),
\end{align*}
\begin{align}
\sum_{n = 1}^\infty ( - 1)^{n - 1} 9^n \binom{4n}{2n}^{ - 1} &= \frac{9}{25} + \frac{{4 \sqrt 3\,\pi }}{375} + \frac{{22\sqrt 3 }}{125}\ln \big( 2+ \sqrt 3\big).\nonumber
\end{align}

In addition, since $y({1}/{\sqrt3})=(2-\sqrt3)^2$ and $\arctan(2-\sqrt3)={\pi}/{12}$, we have
\begin{align*}
\sum_{n=1}^\infty \frac{(-1)^{n-1}}{n^2 3^n} \binom{4n}{2n}^{- 1} &= - \frac{\pi^2}{18} + \frac{1}{2} \ln^2 3 , \\
\sum_{n=1}^\infty \frac{(-1)^{n-1}}{n 3^n} \binom{4n}{2n}^{- 1} &=  - \frac{\sqrt3}{21}\, \pi +\frac{2}{7} \ln3, \\
\sum_{n=1}^\infty \frac{(-1)^{n-1}}{3^n} \binom{4n}{2n}^{- 1} &= \frac{1}{49}  - \frac{10 \sqrt{3}}{1029}\pi+ \frac{27}{343} \ln3.
\end{align*}

We note that since $\binom{4n}{2n}=(2n+1)C_{2n}$, where $C_{2n}$ are Catalan numbers, our results could be stated equivalently in terms of the Catalan numbers. Similar series were studied recently by the authors \cite{AFG1,AFG2}, Chu and Esposito \cite{ChuEspo},
Bhandari \cite{bhandari}, Chen \cite{Chen}, and Stewart \cite{stewart}, among others.

We will also establish connections with the Fibonacci and Lucas numbers. As usual, the Fibonacci numbers $F_n$
and the Lucas numbers $L_n$ are defined, for \text{$n\in\mathbb Z$}, through the recurrence relations
$F_n = F_{n-1}+F_{n-2}$, $n\ge 2$, with initial values $F_0=0$, $F_1=1$ and $L_n = L_{n-1}+L_{n-2}$ with $L_0=2$, $L_1=1$.
For negative subscripts we have $F_{-n}=(-1)^{n-1}F_n$ and $L_{-n}=(-1)^n L_n$. They possess the Binet's formulas
\begin{equation*}%\label{binet}
F_n = \frac{\alpha^n - \beta^n }{\alpha - \beta },\qquad L_n = \alpha^n + \beta^n,\qquad n\in\mathbb Z,
\end{equation*}
with $\alpha=(1+\sqrt{5})/2$ being the golden ratio and $\beta=-1/\alpha$.
For more information we refer to the  On-Line Encyclopedia of Integer Sequences \cite{OEIS} where the sequences
$(F_n)_{n\geq 0}$ and $(L_n)_{n\geq 0}$ are indexed as entries A0100045 and A000032, respectively.

\section{Main results}

\begin{theorem}\label{main_thm1}
For $|x|\leq 4$ and
\begin{equation}\label{eq.m4bsgj7}
y = y(x) = \frac{{\sqrt {x^2  + 16} - 4}}{x},
\end{equation}
the following series identities hold true:
\begin{align}\label{eq.c9vae0a}
&\sum_{n = 1}^\infty \frac{( - 1)^{n - 1}}{n^2} \binom{4n}{2n}^{-1 } x^{2n} = 8\left(\arctanh^2 {\sqrt y } - \arctan^2 {\sqrt y}\right)\,,\\
\label{eq.ccu9ufs}
&\sum_{n = 1}^\infty \frac{( - 1)^{n - 1}}{n} \binom{4n}{2n}^{ - 1} x^{2n} = \frac{4\sqrt y\,(y^2 - 1)}{y^2 + 1}\left( {\frac{\arctan {\sqrt y }}{y + 1} + \frac{\arctanh\sqrt y }{{y - 1}}} \right)\!,
\end{align}
and
\begin{equation}\label{eq.genSprug}
\begin{split}
\sum_{n = 1}^\infty ( - 1)^{n - 1} \binom{4n}{2n}^{ - 1} x^{2n} &= \frac{4y^2}{(y^2  + 1)^2} + \left( {\frac{{(y - 1)^3 }}{{(y^2  + 1)^2 }}
- \frac{{8(y - 1)y^2}}{(y^2  + 1)^3 }} \right)\sqrt{y}\arctan {\sqrt y } \\
&\quad+ \left( {\frac{{(y + 1)^3 }}{{(y^2  + 1)^2 }}} - \frac{8(y + 1)y^2}{(y^2  + 1)^3} \right)\sqrt{y}\arctanh{\sqrt y}.
\end{split}
\end{equation}
\end{theorem}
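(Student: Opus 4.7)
My strategy is to derive all three identities by applying bisection to classical generating functions for $\binom{2m}{m}^{-1}$, combined with a parametrisation that converts the resulting complex arguments into the real variable $y$. The starting point consists of the three well-known evaluations
\begin{equation*}
\sum_{m=1}^\infty \frac{z^m}{m^2 \binom{2m}{m}} = 2\arcsin^2\!\bigg(\frac{\sqrt z}{2}\bigg), \qquad \sum_{m=1}^\infty \frac{z^m}{m \binom{2m}{m}} = \frac{2\sqrt z\,\arcsin(\sqrt z/2)}{\sqrt{4-z}},
\end{equation*}
\begin{equation*}
\sum_{m=0}^\infty \frac{z^m}{\binom{2m}{m}} = \frac{4}{4-z} + \frac{4\sqrt z\,\arcsin(\sqrt z/2)}{(4-z)^{3/2}},
\end{equation*}
each valid for $|z|<4$. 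The bisection $F(z)\mapsto \tfrac12(F(z)+F(-z))$ isolates the even-index terms $m=2n$, which carry the coefficient $\binom{4n}{2n}$; substituting $z=ix$ then converts each even subseries into an alternating sum of the exact form appearing on the left of \eqref{eq.c9vae0a}--\eqref{eq.genSprug}.

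The next task is to re-express the resulting complex right-hand sides in terms of $y=y(x)$. I would first observe that the defining relation $y=(\sqrt{x^2+16}-4)/x$ inverts as $x=8y/(1-y^2)$, from which one computes
\begin{equation*}
\sqrt{\pm ix}=\frac{2\sqrt y\,(1\pm i)}{\sqrt{1-y^2}}, \qquad 4\mp ix=\frac{4(1\mp iy)^2}{1-y^2}, \qquad \sqrt{4\mp ix}=\frac{2(1\mp iy)}{\sqrt{1-y^2}}.
\end{equation*}
The key identity driving everything is
\begin{equation*}
\arcsin\!\bigg(\frac{\sqrt{ix}}{2}\bigg) = \arctan\sqrt y + i\,\arctanh\sqrt y,
\end{equation*}
which I would verify by taking the sine of the right-hand side and checking, via the standard trigonometric and hyperbolic values, that the result equals $\sqrt y\,(1+i)/\sqrt{1-y^2}$; the conjugate formula $\arcsin(\sqrt{-ix}/2)=\arctan\sqrt y-i\arctanh\sqrt y$ then follows automatically.

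With these pieces in place, identity \eqref{eq.c9vae0a} is immediate, since $\arcsin^2(\sqrt{ix}/2)+\arcsin^2(\sqrt{-ix}/2)=2(\arctan^2\sqrt y-\arctanh^2\sqrt y)$. For \eqref{eq.ccu9ufs} I would substitute the above pieces into the bisected second generating function and place the two conjugate terms over the common denominator $(1-iy)(1+iy)=1+y^2$; the imaginary parts then cancel and the real parts collapse onto the stated expression. Identity \eqref{eq.genSprug} can be obtained in the same spirit, but much more efficiently by differentiating \eqref{eq.ccu9ufs} term-by-term and multiplying by $x/2$, using the chain rule together with
\begin{equation*}
\frac{dy}{dx} = \frac{(1-y^2)^2}{8(1+y^2)}.
\end{equation*}

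The main technical hurdle I anticipate is the algebraic bookkeeping for \eqref{eq.genSprug}: the right-hand side carries coefficients of the shape $(y\mp 1)^3/(y^2+1)^2-8(y\mp 1)y^2/(y^2+1)^3$, which only emerge once the contributions of $1/(1\mp iy)^2$ and $1/(1\mp iy)^3$ are expanded, paired with their conjugates, and separated into real and imaginary parts. None of this is conceptually difficult, but it requires patient rationalisation; by contrast, the derivations of \eqref{eq.c9vae0a} and \eqref{eq.ccu9ufs} should be almost immediate once the complex $\arcsin$ identity above is in hand.
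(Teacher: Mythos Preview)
Your proposal is correct and follows essentially the same route as the paper: both start from Lehmer's $\arcsin^2$ series, substitute a square root of $i$ into the argument to isolate the alternating even-index subseries, and then identify the real and imaginary parts of the resulting complex $\arcsin$ with $\arctan\sqrt y$ and $\arctanh\sqrt y$ --- your identity $\arcsin(\sqrt{ix}/2)=\arctan\sqrt y+i\,\arctanh\sqrt y$ is precisely the paper's pair \eqref{eq.h3h5tiw}--\eqref{eq.lebm8kb} after the change of variables. The only cosmetic difference is that the paper obtains \eqref{eq.ccu9ufs} and \eqref{eq.genSprug} by differentiating \eqref{eq.c9vae0a} with respect to $x$ (using $x\,dy/dx=y(1-y^2)/(1+y^2)$), whereas you bisect the pre-differentiated generating functions directly for \eqref{eq.ccu9ufs} and then differentiate once for \eqref{eq.genSprug}; since those three classical series are already related by differentiation, the two organizations are equivalent.
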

\begin{proof}
We start with the identity \cite[Identity (13)]{lehmer85}
\begin{equation}\label{eq.niu3t37}
\sum_{n = 1}^\infty \frac{2^{2n}}{n^{2}}  \binom{2n}n^{ - 1} x^{2n} =  2\arcsin^2 x,\quad |x|\le 2.
\end{equation}
Let $x$ be a non-zero real variable. It is not difficult to show that
\begin{equation}\label{eq.h3h5tiw}
\Re\Big( {\arcsin \big(x\sqrt i\,\big)} \Big) = \arctan \sqrt {\frac{{\sqrt {1 + x^4 } - 1}}{{x^2 }}}
\end{equation}
and
\begin{equation}\label{eq.lebm8kb}
 \Im\Big( {\arcsin \big(x\sqrt i\,\big)} \Big) = \arctanh \sqrt {\frac{{\sqrt {1 + x^4 } - 1}}{{x^2 }}} ,
\end{equation}
where $\Re(X)$ and $\Im(X)$ denote, respectively, the real and the imaginary part of $X$ and $i=\sqrt{-1}$ is the imaginary unit.
Writing $x\sqrt i$ for $x$ in \eqref{eq.niu3t37} and taking the real part using \eqref{eq.h3h5tiw} and \eqref{eq.lebm8kb}
and the fact that $\Re (f^2)=(\Re f)^2-(\Im f)^2$ for arbitrary $f$ gives after replacing $x^2$ with $x$
\begin{equation*}
\sum_{n = 1}^\infty \frac{(- 1)^{n - 1}4^{2n} }{n^{2}}   \binom{4n}{2n}^{ - 1} x^{2n} =
8 \arctanh^2 \sqrt{\frac{\sqrt{1+x^2}-1}{x}} - 8\arctan^2 \sqrt{\frac{\sqrt{1+x^2}-1}{x}}.
\end{equation*}
Identity \eqref{eq.c9vae0a} follows immediately after replacing $x$ by $x/4$ and simplifying.
Noting that
\begin{equation*}
x\frac{dy}{dx} = \frac{1 - y^2}{1 + y^2 }y,
\end{equation*}
and differentiating \eqref{eq.c9vae0a}, after some algebra, the other two results follow.
\end{proof}
\begin{remark}
	We emphasize that in the formulas of Theorem \ref{main_thm1}, the arctangent and inverse hyperbolic tangent functions
	can have a complex number as an argument.  For instance, as $y(4i)=	i$, from  \eqref{eq.c9vae0a} we have
	\begin{align*}
	\sum_{n=1}^\infty \frac{16^{n}}{n^2} \binom{4n}{2n}^{-1} &=  8\left(\arctan^2\sqrt{ i} + \arctan^2\sqrt{-i}\right) \\
	&= -2 \left(\ln^2 \Big(\frac{i}{\sqrt{2}-1}\Big) + \ln^2 \Big(\frac{i}{1-\sqrt{2}}\Big)\right)\\
	&= \pi^2 -4 \ln^2 \big(\sqrt{2}-1\big).
	\end{align*}

	More generally,
	\begin{align*}
	\sum_{n=1}^\infty  \binom{4n}{2n}^{-1} \frac{x^{2n}}{n^2} &=8\left( \arctan^2\sqrt{ y(ix)}-\arctanh^2\sqrt{ y(ix)}\right) \\
	&= 8\left(\arctan^2 \sqrt{ y(ix)} + \arctan^2 \big(i\sqrt{ y(ix)}\big)\right)\\
	&= 8\left(\arctan^2 \sqrt{ y(ix)} - \frac{1}{4}\ln^2 \Big(\frac{1+\sqrt{y(ix)}}{1-\sqrt{y(ix)}}\Big)\right)
	\end{align*}
	or
	\begin{align*}
	\sum_{n=1}^\infty  \binom{4n}{2n}^{-1} \frac{x^{2n}}{n^2} &= 8\left(\arctan^2 \sqrt{ Y_1(x)} - \arctanh^2 \sqrt{ Y_1(x)}\right)\\
	&= 8\left(\arctan^2 \sqrt{ Y_1(x)} - \frac{1}{4}\ln^2 \Big(\frac{1+\sqrt{Y_1(x)}}{1-\sqrt{Y_1(x)}}\Big)\right)\!,
	\end{align*}
	where
	\begin{equation*}
	Y_1(x)=\frac{4-\sqrt{16-x^2}}{x}\cdot i.
	\end{equation*}
\end{remark}
\begin{remark}
Inverting \eqref{eq.m4bsgj7} gives
\begin{equation*}
x(y) = \frac{8y}{1 - y^2};
\end{equation*}
so that \eqref{eq.c9vae0a} is equivalent to
\begin{equation}\label{eq.f6hxcke}
\sum_{n = 1}^\infty\frac{{( - 1)^{n - 1} 8^{2n - 1} }}{{n^2 }}\binom{4n}{2n}^{ - 1}\left( {\frac{y}{{1 - y^2 }}} \right)^{2n}
= \arctanh^2 \sqrt y -\arctan^2 \sqrt y
\end{equation}
which holds for all $y$ with $2|y|\le |1 - y^2|$, i.e., for all $y$ with $y\leq -1-\sqrt{2}$ or $y \geq 1+\sqrt{2}$
or $1-\sqrt{2}\leq y \leq \sqrt{2}-1$. Also, we have
\begin{equation}\label{eq.z1vtemc}
\sum_{n = 1}^\infty  {\frac{{( - 1)^{n - 1} 8^{2n - 1} }}{n}\binom{4n}{2n}^{ - 1} }\left( \frac{y}{1 - y^2} \right)^{2n}   = \frac{\sqrt y(1 - y^2)}{2(1 + y^2)} \left( \frac{\arctanh \sqrt y }{1 - y} - \frac{\arctan\sqrt y }{1 + y} \right),
\end{equation}
as well as
\begin{equation}\label{eq.rx3n6il}
\begin{split}
\sum_{n = 1}^\infty & ( - 1)^{n - 1} 8^{2n} \binom{4n}{2n}^{ - 1} \left( {\frac{y}{{1 - y^2 }}} \right)^{2n}  \\
&\qquad\qquad=(1 - y)(1 + 4y + y^2 )\frac{1 - y^2 }{{(1 + y^2 )^3 }} \sqrt{y} \arctanh \sqrt y\\
&\qquad\qquad\quad\, - (1 + y)(1 - 4y + y^2 )\frac{1 - y^2}{{(1 + y^2 )^3 }} \sqrt{y} \arctan \sqrt y+\frac{4y^2}{(1 + y^2 )^2}.
\end{split}
\end{equation}
\end{remark}

Setting $y=\tan(\theta/2)$ and using the fact that
\begin{equation*}
\sin \theta = \frac{{2\tan (\theta/2)}}{{1 + \tan ^2 (\theta/2)}},\quad\cos \theta = \frac{{1 - \tan ^2 (\theta/2)}}{{1 + \tan ^2 (\theta/2)}},\quad\tan \theta = \frac{{2\tan (\theta/2)}}{{1 - \tan ^2 (\theta/2)}},
\end{equation*}
we obtain the trigonometric versions of identities \eqref{eq.f6hxcke}--\eqref{eq.rx3n6il}.
\begin{proposition}\label{prop.nrbtka2}
If $|\tan\theta|\le 1$, then
\begin{align*}
\sum_{n = 1}^\infty\, & \frac{{( - 1)^{n - 1} 16^{n} }}{{n^2 }}\binom{4n}{2n}^{ - 1} \tan ^{2n} \theta   = 8\arctanh^2 \sqrt {\tan \frac{\theta}2} - 8\arctan^2 \sqrt {\tan \frac{\theta}2},\\
\sum_{n = 1}^\infty \, &\frac{( - 1)^{n - 1} 16^{n} }{n}\binom{4n}{2n}^{ - 1} \tan ^{2n} \theta\\
& = 2\sqrt {\tan \frac{\theta}2}\left( (\sin \theta  + \cos \theta  + 1)  \arctanh \sqrt {\tan \frac{\theta}2}+ \left( {\sin \theta  - \cos \theta  - 1} \right) \arctan \sqrt{\tan \frac{\theta}2}\right),
\end{align*}
and
\begin{equation*}
\begin{split}
\sum_{n = 1}^\infty & ( - 1)^{n - 1} 16^{n} \binom{4n}{2n}^{ - 1} \tan ^{2n} \theta\\
&= \frac12( {\sin 2\theta + \cos 2\theta + \sin \theta  + \cos \theta })\cos \theta \sqrt {\tan \frac{\theta}2} \arctanh \sqrt {\tan \frac{\theta}2} \\
&\quad  +\frac12 ( {\sin 2\theta - \cos 2\theta + \sin \theta  - \cos \theta } )\cos \theta \sqrt {\tan \frac{\theta}2} \arctan \sqrt {\tan \frac{\theta}2} + \sin ^2 \theta.
\end{split}
\end{equation*}
\end{proposition}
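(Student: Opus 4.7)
The plan is to derive the three trigonometric identities directly from equations \eqref{eq.f6hxcke}, \eqref{eq.z1vtemc}, and \eqref{eq.rx3n6il} by making the substitution $y=\tan(\theta/2)$ and then using the standard Weierstrass half-angle formulas recalled just before the proposition. This reduces the claim to a purely algebraic verification on each side.

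First I would handle the argument of the series. The key kinematic identity is
\[
\frac{y}{1-y^{2}}=\frac{\tan(\theta/2)}{1-\tan^{2}(\theta/2)}=\frac{\tan\theta}{2},
\]
so $\bigl(y/(1-y^{2})\bigr)^{2n}=\tan^{2n}\theta/4^{n}$. Combining this with $8^{2n-1}/4^{n}=16^{n}/8$ and $8^{2n}/4^{n}=16^{n}$ instantly converts the coefficient factors $8^{2n-1}$ and $8^{2n}$ of \eqref{eq.f6hxcke}--\eqref{eq.rx3n6il} into the $16^{n}$ appearing in the proposition (up to an overall factor of $8$ that I clear by multiplication). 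The convergence condition $2|y|\le|1-y^{2}|$ becomes $|\tan\theta|\le 1$ exactly as claimed.

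Next I would simplify each right-hand side. For \eqref{eq.c9vae0a}'s trigonometric form nothing needs to be done on the right beyond the substitution, giving the first identity at once. For the second one, I would rewrite $\frac{\arctanh\sqrt y}{1-y}-\frac{\arctan\sqrt y}{1+y}=\frac{1}{1-y^{2}}\bigl((1+y)\arctanh\sqrt y-(1-y)\arctan\sqrt y\bigr)$ and observe
\[
\sin\theta+\cos\theta+1=\frac{2(1+y)}{1+y^{2}},\qquad \sin\theta-\cos\theta-1=-\frac{2(1-y)}{1+y^{2}},
\]
which match the transformed right-hand side after multiplication by $8$. For the third (and bulkiest) identity, the same half-angle dictionary, together with $\sin 2\theta=2\sin\theta\cos\theta$ and $\cos 2\theta=\cos^{2}\theta-\sin^{2}\theta$, turns the rational functions of $y$ on the right of \eqref{eq.rx3n6il} into the trigonometric combinations displayed; in particular the factor $\cos\theta=(1-y^{2})/(1+y^{2})$ accounts for the $(1-y^{2})/(1+y^{2})^{3}$ factors, while $\sin^{2}\theta=4y^{2}/(1+y^{2})^{2}$ reproduces the constant term $4y^{2}/(1+y^{2})^{2}$.

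The main obstacle is exactly that third verification: $(1-y)(1+4y+y^{2})\cos\theta$ has to be matched with $\frac{1}{2}(\sin 2\theta+\cos 2\theta+\sin\theta+\cos\theta)$, and similarly for the arctangent coefficient. I would not grind through the expansions in the final write-up; rather, I would record the substitutions for $\sin\theta,\cos\theta,\sin 2\theta,\cos 2\theta$ as rational functions of $y$, multiply through by $(1+y^{2})^{3}$ to clear denominators, and invoke polynomial identity in $y$ as justification. Because every manipulation is reversible on the stated range, no separate analytic continuation argument is needed.
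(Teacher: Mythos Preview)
Your proposal is correct and follows exactly the approach the paper takes: substitute $y=\tan(\theta/2)$ into \eqref{eq.f6hxcke}--\eqref{eq.rx3n6il} and simplify via the half-angle formulas. Your write-up is in fact more detailed than the paper's own derivation, which merely announces the substitution; the algebraic checks you outline (e.g., $\sin\theta+\cos\theta+1=2(1+y)/(1+y^{2})$ and the matching of $(1-y)(1+4y+y^{2})/(1+y^{2})^{2}$ with $\tfrac12(\sin2\theta+\cos2\theta+\sin\theta+\cos\theta)$) are all correct.
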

Evaluation at $\theta=\pi/4$ gives the next set of results.
\begin{proposition}
We have
\begin{align*}
\sum_{n = 1}^\infty & \frac{( - 1)^{n - 1} 16^{n} }{{n^2 }}\binom{4n}{2n}^{ - 1}  = 2\ln^2 \Big( \sqrt 2  + 1+\sqrt {2(\sqrt 2  + 1)}\Big)  - 8 \arctan^2 \sqrt {\sqrt 2  - 1},\\
\sum_{n = 1}^\infty&\frac{{( - 1)^{n - 1} 16^{n} }}{n}\binom{4n}{2n}^{ - 1} \\
& \qquad= \sqrt {\sqrt 2  + 1} \ln \Big(\sqrt 2  + 1+\sqrt {2(\sqrt 2  + 1)}\Big)- 2\sqrt {\sqrt 2  - 1}\, \arctan \sqrt {\sqrt 2  - 1},
\end{align*}
and
\begin{align*}
\sum_{n = 1}^\infty&  ( - 1)^{n - 1} 16^{n} \binom{4n}{2n}^{ - 1} \\
  &=\frac12 + \frac{\sqrt {2(\sqrt 2  + 1)}}{8} \ln \Big( \sqrt 2  + 1+ \sqrt {2(\sqrt 2  + 1)}\Big)+\, \frac{\sqrt {2(\sqrt 2  - 1)}}{4}\arctan \sqrt {\sqrt 2  - 1}.
\end{align*}
\end{proposition}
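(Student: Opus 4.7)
The plan is straightforward in spirit: specialize Proposition~\ref{prop.nrbtka2} at $\theta=\pi/4$ and then untangle the resulting nested radicals. At this value, $\tan\theta=1$ so the left-hand sides collapse to the advertised numerical series; on the right one has $\sin\theta=\cos\theta=\tfrac{\sqrt2}{2}$, $\sin 2\theta=1$, $\cos 2\theta=0$, and the half-angle formula delivers $\tan(\theta/2)=\tan(\pi/8)=\sqrt2-1$. In particular $\sqrt{\tan(\theta/2)}=\sqrt{\sqrt2-1}$, which is precisely the surd appearing inside every $\arctan$ on the right.

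The first real task is to rewrite $\arctanh\sqrt{\sqrt2-1}$ in the stated logarithmic form. Using $\arctanh u=\tfrac12\ln\!\big((1+u)/(1-u)\big)$ with $u=\sqrt{\sqrt2-1}$, I would rationalize
\[
\frac{1+\sqrt{\sqrt2-1}}{1-\sqrt{\sqrt2-1}}
=\frac{(1+\sqrt{\sqrt2-1})^2}{2-\sqrt2}
=\frac{\sqrt2+2\sqrt{\sqrt2-1}}{2-\sqrt2},
\]
and then multiply numerator and denominator by $\sqrt2+1$; the key nested-radical simplification is
\[
(\sqrt2+1)\sqrt{2\sqrt2-2}=\sqrt{(3+2\sqrt2)(2\sqrt2-2)}=\sqrt{2\sqrt2+2}=\sqrt{2(\sqrt2+1)},
\]
which yields $\dfrac{1+\sqrt{\sqrt2-1}}{1-\sqrt{\sqrt2-1}}=\sqrt2+1+\sqrt{2(\sqrt2+1)}$. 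Thus $\arctanh\sqrt{\sqrt2-1}=\tfrac12\ln\!\bigl(\sqrt2+1+\sqrt{2(\sqrt2+1)}\bigr)$, and squaring gives the $8\arctanh^2$ term its stated $2\ln^2(\cdot)$ form; this disposes of the first identity.

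For the second and third identities the coefficients in Proposition~\ref{prop.nrbtka2} reduce to the constants $\sin\theta+\cos\theta+1=\sqrt2+1$, $\sin\theta-\cos\theta-1=-1$, $\sin 2\theta+\cos 2\theta+\sin\theta+\cos\theta=1+\sqrt2$, and $\sin 2\theta-\cos 2\theta+\sin\theta-\cos\theta=1$. Substituting everything and inserting the logarithmic form of $\arctanh\sqrt{\sqrt2-1}$, the only remaining algebra consists of simplifying the surd coefficients. Specifically I would verify
\[
(\sqrt2+1)\sqrt{\sqrt2-1}=\sqrt{(3+2\sqrt2)(\sqrt2-1)}=\sqrt{\sqrt2+1},
\qquad
\sqrt2\,(1+\sqrt2)\sqrt{\sqrt2-1}=\sqrt{2(\sqrt2+1)},
\]
by squaring; these two identities are what convert the raw substituted expressions into the compact forms printed in the proposition.

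The potential obstacle is not conceptual but notational: the nested radicals $\sqrt{\sqrt2\pm1}$ and $\sqrt{2(\sqrt2\pm1)}$ admit several equivalent presentations, so I will have to be careful to match exactly the forms appearing in the statement and to keep $\arctanh$ and $\arctan$ terms separate throughout. Once the two surd simplifications above are recorded, everything else is bookkeeping.
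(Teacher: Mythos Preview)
Your proposal is correct and follows exactly the paper's approach: the paper merely says ``Evaluation at $\theta=\pi/4$ gives the next set of results,'' and you have carried out that evaluation in detail, including the surd simplifications $(\sqrt2+1)\sqrt{\sqrt2-1}=\sqrt{\sqrt2+1}$ and $\arctanh\sqrt{\sqrt2-1}=\tfrac12\ln\bigl(\sqrt2+1+\sqrt{2(\sqrt2+1)}\bigr)$ that the paper leaves implicit.
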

\begin{remark}
	Using $\arctan\frac{1}{x}=\arccot x$ for $x>0$, we can express each of the above identities in a simpler way. For instance,
	\begin{align*}
	\sum_{n=1}^\infty \frac{(-1)^{n-1} 16^{n}}{n^2} \binom{4n}{2n}^{-1} =2  \ln^2\big(\delta+\sqrt{2\delta}\big) - 8\arccot^2 \sqrt{\delta},
	\end{align*}
	where $\delta=\sqrt2+1$ is the silver ratio.
\end{remark}

Choosing $\theta=\pi/6$ in Proposition \ref{prop.nrbtka2} produces the following results.
\begin{proposition}
We have
\begin{align*}
&\sum_{n = 1}^\infty  {\frac{{( - 1)^{n - 1} }}{{n^2 }}\left( {\frac{{16}}{3}} \right)^n \binom{4n}{2n}^{ - 1} }  = 2\ln ^2 \big( {\sqrt 2  + \sqrt 3 } \big) - 8\arctan ^2 \Big( \frac{\sqrt 6  - \sqrt 2}{2} \Big),\\
&\sum_{n = 1}^\infty  {\frac{{( - 1)^{n - 1} }}{n}\left( {\frac{{16}}{3}} \right)^n \binom{4n}{2n}^{ - 1} }  = \frac{\sqrt 6}{2}\ln \big( {\sqrt 2  + \sqrt 3 } \big) - \sqrt 2\arctan \Big( \frac{\sqrt 6  - \sqrt 2}{2} \Big),
\end{align*}
and
\begin{equation*}
\sum_{n = 1}^\infty  {( - 1)^{n - 1} \left( {\frac{{16}}{3}} \right)^n \binom{4n}{2n}^{ - 1} }  = \frac{1}{4} + \frac{\sqrt 6}{8}  \,\ln \big( {\sqrt 2  + \sqrt 3 } \big).
\end{equation*}
\end{proposition}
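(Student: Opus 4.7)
My plan is to obtain all three identities as direct corollaries of Proposition \ref{prop.nrbtka2} by specializing at $\theta=\pi/6$ and carrying out the resulting trigonometric and surd simplifications. First I would record the exact values at $\theta=\pi/6$: namely $\tan\theta=1/\sqrt{3}$ (so $16^n\tan^{2n}\theta=(16/3)^n$ produces the announced left-hand sides), together with $\sin\theta=1/2$, $\cos\theta=\sqrt{3}/2$, $\sin 2\theta=\sqrt{3}/2$, $\cos 2\theta=1/2$. For the half-angle I would invoke the standard value $\tan(\pi/12)=2-\sqrt{3}$ together with the denesting $\sqrt{2-\sqrt{3}}=\tfrac{\sqrt{6}-\sqrt{2}}{2}$, which at once identifies $\arctan\sqrt{\tan(\theta/2)}=\arctan\bigl(\tfrac{\sqrt{6}-\sqrt{2}}{2}\bigr)$ in each formula.

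The only genuinely non-routine step is rewriting $\arctanh\sqrt{\tan(\pi/12)}$ in closed logarithmic form. Using $\arctanh x=\tfrac{1}{2}\ln\tfrac{1+x}{1-x}$ with $x=\sqrt{2-\sqrt{3}}$, I would rationalise by multiplying through by $1+x$ to turn the quotient into $\tfrac{3-\sqrt{3}+2\sqrt{2-\sqrt{3}}}{\sqrt{3}-1}$, and then use $2\sqrt{2-\sqrt{3}}=\sqrt{6}-\sqrt{2}=\sqrt{2}(\sqrt{3}-1)$ to factor the numerator as $(\sqrt{3}+\sqrt{2})(\sqrt{3}-1)$. The $\sqrt{3}-1$ cancels, leaving $\arctanh\sqrt{\tan(\pi/12)}=\tfrac{1}{2}\ln(\sqrt{2}+\sqrt{3})$. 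This immediately gives $8\arctanh^2\sqrt{\tan(\pi/12)}=2\ln^2(\sqrt{2}+\sqrt{3})$ and therefore the first claimed identity.

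For the second identity I would compute the coefficients $2\sqrt{\tan(\pi/12)}\bigl(\sin\theta\pm\cos\theta\pm 1\bigr)$. The two products $(\sqrt{6}-\sqrt{2})\cdot\tfrac{3+\sqrt{3}}{2}$ and $(\sqrt{6}-\sqrt{2})\cdot\tfrac{1+\sqrt{3}}{2}$ collapse to $\sqrt{6}$ and $\sqrt{2}$, respectively, after expanding via $\sqrt{6}\cdot\sqrt{3}=3\sqrt{2}$ and $\sqrt{2}\cdot\sqrt{3}=\sqrt{6}$; combined with the previous logarithmic evaluation this yields the stated closed form. For the third identity the key observation is that the $\arctan$ coefficient $\tfrac12(\sin 2\theta-\cos 2\theta+\sin\theta-\cos\theta)\cos\theta\sqrt{\tan(\theta/2)}$ vanishes at $\theta=\pi/6$, because $\tfrac{\sqrt 3}{2}-\tfrac12+\tfrac12-\tfrac{\sqrt 3}{2}=0$; this is precisely why no $\arctan$ term appears. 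The remaining $\arctanh$ coefficient $\tfrac12(1+\sqrt{3})\cdot\tfrac{\sqrt{3}}{2}\cdot\tfrac{\sqrt{6}-\sqrt{2}}{2}$ reduces to $\tfrac{\sqrt{6}}{4}$ after using $(1+\sqrt{3})(\sqrt{6}-\sqrt{2})=2\sqrt{2}$, and together with the constant term $\sin^2(\pi/6)=1/4$ this produces the third identity. No analytic obstacle arises; the entire proof is a bookkeeping exercise in radical arithmetic.
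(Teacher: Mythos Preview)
Your proposal is correct and follows exactly the route the paper takes: the paper simply states that the proposition follows by choosing $\theta=\pi/6$ in Proposition~\ref{prop.nrbtka2}, and you have carried out the resulting surd arithmetic in full detail (including the key simplifications $\sqrt{2-\sqrt{3}}=\tfrac{\sqrt{6}-\sqrt{2}}{2}$ and $\arctanh\sqrt{2-\sqrt{3}}=\tfrac12\ln(\sqrt{2}+\sqrt{3})$, and the vanishing of the $\arctan$ coefficient in the third identity). Your write-up is in fact more explicit than the paper's, which leaves all of this to the reader.
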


Some results involving Fibonacci ( Lucas) numbers and the golden ratio are presented in the next proposition.
\begin{proposition}
	Let $r$ be a positive integer. If $r$ is even with $r\ne0$, then
	\begin{align}\label{eq.uuuu8t9}
	&\sum_{n = 1}^\infty \frac{{(- 1)^{n - 1}}}{n^2} \left( \frac{64}{5F_r^2} \right)^n \binom{4n}{2n}^{- 1}
	= 2\ln^2 \Big(\frac{\sqrt{\alpha^r}+1}{\sqrt{\alpha^r}-1} \Big) - 8 \arccot^2 \sqrt{\alpha^r},\\
	\begin{split}
	&\sum_{n = 1}^\infty \frac{(- 1)^{n - 1}}{n} \left( \frac{64}{5F_r^2} \right)^n \binom{4n}{2n}^{- 1}\\
	&\qquad\qquad\qquad\qquad=  \frac{2}{\sqrt{\alpha^r}L_r}\left((\alpha^r+1)\ln\Big(\frac{\sqrt{\alpha^r}+1}{\sqrt{\alpha^r}-1}\Big)-2(\alpha^r-1)\arccot\sqrt{\alpha^r}\right)\!,\label{last}
	\end{split}
	\end{align}
	and
	\begin{align}\label{eq.xc5fj25}
	\begin{split}
	&\sum_{n=1}^\infty (-1)^{n-1} \Big(\frac{64}{5F_r^2}\Big)^n \binom{4n}{2n}^{-1}\\
	&\qquad\qquad\qquad=\frac{4}{L^2_r} + \frac{5\sqrt{\alpha^r}F^2_r}{2L^3_r}\left(\frac{L_r+4}{\alpha^r+1}\ln\Big(\frac{\sqrt{\alpha^r}+1}{\sqrt{\alpha^r}-1}\Big)-\frac{2(L_r-4)}{\alpha^r-1}\arccot\sqrt{\alpha^r}\right)\!.
	\end{split}
	\end{align}

	If $r$ is odd, then
	\begin{align*}
	&\sum_{n=1}^\infty \frac{(-1)^{n-1}}{n^2} \Big(\frac{64}{L_r^2}\Big)^n \binom{4n}{2n}^{-1}
	= 2\ln^2\Big(\frac{\sqrt{\alpha^r}+1}{\sqrt{\alpha^r}-1}\Big) - 8 \arccot^2\sqrt{\alpha^r}, \\
	&\sum_{n=1}^\infty \frac{(-1)^{n-1}}{n} \Big(\frac{64}{L_r^2}\Big)^n \binom{4n}{2n}^{-1} = \frac{2\sqrt{\alpha^r}L_r}{\sqrt5F_r}\left(\frac{1}{\alpha^r-1}\ln\Big(\frac{\sqrt{\alpha^r}+1}{\sqrt{\alpha^r}-1}\Big)-\frac{2}{\alpha^r+1} \arccot\sqrt{\alpha^r}\right)\!,\\
	&\sum_{n=1}^\infty (-1)^{n-1}\Big(\frac{64}{L_r^2}\Big)^n \binom{4n}{2n}^{-1}=\frac{4}{5F_r^2}\\
	&\quad+ \frac{\sqrt{5}L_r}{50\sqrt{\alpha^r}F^3_r}\left((\alpha^r-1)(\sqrt5F_r+4)\ln\Big(\frac{\sqrt{\alpha^r}+1}{\sqrt{\alpha^r}-1}\Big)-2(\alpha^r+1)(\sqrt5F_r-4) \arccot\sqrt{\alpha^r}\right)\!.
	\end{align*}
\end{proposition}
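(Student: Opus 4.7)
The plan is to specialize the $y$-parametrized identities \eqref{eq.f6hxcke}--\eqref{eq.rx3n6il} at $y=\alpha^{-r}$ and then unpack everything via the Binet formulas. No new analytic input is needed; the entire argument is algebraic substitution.

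The pivotal observation is that, for $y=\alpha^{-r}$, the series parameter $8y/(1-y^2)$ already produces the Fibonacci/Lucas quantities in the statement. Since $\beta=-1/\alpha$, one has $\alpha^r-\alpha^{-r}=\alpha^r-\beta^r=\sqrt5\,F_r$ for $r$ even and $\alpha^r-\alpha^{-r}=\alpha^r+\beta^r=L_r$ for $r$ odd, hence
\[
\frac{8y}{1-y^2}\bigg|_{y=\alpha^{-r}}=\frac{8\alpha^r}{\alpha^{2r}-1}=\frac{8}{\alpha^r-\alpha^{-r}}=
\begin{cases}8/(\sqrt5\,F_r),&r\text{ even},\\[2pt]8/L_r,&r\text{ odd}.\end{cases}
\]
Squaring yields the ratios $64/(5F_r^2)$ and $64/L_r^2$ appearing on the series side, and the convergence condition $2|y|\le|1-y^2|$ from the Remark (equivalent to $\alpha^{-r}\le\sqrt2-1$) holds for all $r\ge 2$.

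Using $\sqrt y=1/\sqrt{\alpha^r}$ together with the standard identities $\arctan(1/z)=\arccot z$ and $\arctanh(1/z)=\tfrac12\ln\!\big((z+1)/(z-1)\big)$, the right-hand side of \eqref{eq.f6hxcke} evaluates to $\tfrac14\ln^2\!\tfrac{\sqrt{\alpha^r}+1}{\sqrt{\alpha^r}-1}-\arccot^2\sqrt{\alpha^r}$; multiplying by $8$ delivers the first identity in each parity case. For \eqref{eq.z1vtemc} and \eqref{eq.rx3n6il}, I would apply the factorizations
\[
1\pm y=\alpha^{-r}(\alpha^r\pm1),\quad 1\pm y^2=\alpha^{-r}(\alpha^r\pm\alpha^{-r}),\quad 1\pm 4y+y^2=\alpha^{-r}(\alpha^r+\alpha^{-r}\pm 4),
\]
together with the key relation $(\alpha^r-1)(\alpha^r+1)=\alpha^{2r}-1=\alpha^r(\alpha^r-\alpha^{-r})$. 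For $r$ even this reads $\sqrt5\,F_r=(\alpha^r-1)(\alpha^r+1)/\alpha^r$, and for $r$ odd $L_r=(\alpha^r-1)(\alpha^r+1)/\alpha^r$; these are precisely what converts the awkward quotients $\sqrt5 F_r/(\alpha^r\mp 1)$ and $L_r/(\alpha^r\mp 1)$ into the clean linear factors $(\alpha^r\pm 1)/\alpha^r$ visible in \eqref{last} and \eqref{eq.xc5fj25}.

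I do not foresee a substantive obstacle. The main care is bookkeeping: recalling that $\alpha^r-\alpha^{-r}$ equals $\sqrt5F_r$ when $r$ is even but $L_r$ when $r$ is odd (while $\alpha^r+\alpha^{-r}$ swaps the roles), and tracking the sign of $1-y^2$ through the rational-coefficient simplifications in \eqref{eq.rx3n6il}. Once this Binet dictionary is in place, each of the six identities follows from \eqref{eq.f6hxcke}--\eqref{eq.rx3n6il} by direct substitution and cancellation.
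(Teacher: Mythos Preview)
Your approach is correct and is essentially the same as the paper's: both specialize the $y$-parametrized identities \eqref{eq.f6hxcke}--\eqref{eq.rx3n6il} and then simplify via Binet's formulas. The only difference is the choice of parameter value: the paper plugs in $y=\alpha^{r}$ (even $r$) and $y=-\alpha^{r}$ (odd $r$), whereas you use $y=\alpha^{-r}$ uniformly. These choices are equivalent because the left-hand sides of \eqref{eq.f6hxcke}--\eqref{eq.rx3n6il} depend only on $\bigl(y/(1-y^{2})\bigr)^{2}$, which is invariant under $y\mapsto 1/y$ and $y\mapsto -y$; your choice has the minor advantage that $0<\alpha^{-r}<1$, so $\arctanh\sqrt{y}$ and $\arctan\sqrt{y}$ stay real throughout and no complex branches need to be handled. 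Your convergence remark (the condition $\alpha^{-r}\le\sqrt{2}-1$, i.e.\ $r\ge 2$) is also apt and in fact shows the odd-$r$ statements require $r\ge 3$.
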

\begin{proof}
	Set $y=\alpha^r$ in \eqref{eq.f6hxcke}--\eqref{eq.rx3n6il} to obtain \eqref{eq.uuuu8t9}--\eqref{eq.xc5fj25} and $y=-\alpha^r$ to obtain
	the remaining three identities.
\end{proof}

\section{Alternating series involving $C(4n+2,2n+1)$}

The counterparts of \eqref{eq.c9vae0a}--\eqref{eq.genSprug} involving $\binom{4n+2}{2n+1}^{-1}$ are stated in the next theorem.
\begin{theorem}\label{main_thm2}
For $ |x|\leq 4$, the following series identity holds true
\begin{equation}\label{eq.main_bin21}
\sum_{n = 0}^\infty  \frac{(- 1)^n}{(2n+1)^2}  \binom{4n+2}{2n+1}^{- 1}x^{2n+1} =
4 \arctanh \sqrt y \,\arctan \sqrt y,
\end{equation}
where $y=y(x)$ is defined in \eqref{eq.m4bsgj7}. We also have
\begin{equation}\label{eq.main_bin22}
\sum_{n = 0}^\infty\frac{(- 1)^{n}}{2n+1}  \binom{4n+2}{2n+1}^{-1}x^{2n+1} =
 \frac{2 \sqrt{y}(1-y^2)}{1+y^2}\left ( \frac{\arctanh {\sqrt y}}{1+y} + \frac{\arctan  {\sqrt y} }{1-y}\right )
\end{equation}
and
\begin{align}\label{eq.main_bin23}
\sum_{n = 0}^\infty (- 1)^{n}  &\binom{4n+2}{2n+1}^{- 1} x^{2n+1} \nonumber \\
&\qquad= \frac{2y(1-y^2)}{(1+y^2)^2} +  \frac{(1+y)(1-y)^2 (y^2+4y+1)}{(1+y^2)^3}\sqrt{y} \arctan\sqrt{y} \nonumber \\
&\qquad\quad +  \frac{(1-y)(1+y)^2 (y^2-4y+1)}{(1+y^2)^3}\sqrt{y} \arctanh\sqrt{y}.
\end{align}
\end{theorem}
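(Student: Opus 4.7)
The plan is to mirror the proof of Theorem~\ref{main_thm1}, but this time to take the \emph{imaginary} part of Lehmer's identity \eqref{eq.niu3t37} after the substitution $x\mapsto x\sqrt{i}$. Since $i^n$ is purely real for even $n$ and equals $(-1)^m i$ for $n=2m+1$, the imaginary part isolates exactly the odd-indexed terms of the left-hand side and produces
\[
\sum_{m=0}^\infty\frac{(-1)^m\,4^{2m+1}}{(2m+1)^2}\binom{4m+2}{2m+1}^{-1}x^{4m+2}.
\]
On the right-hand side, using $\Im(f^2)=2\,\Re(f)\,\Im(f)$ together with \eqref{eq.h3h5tiw} and \eqref{eq.lebm8kb}, I obtain $\Im\bigl(2\arcsin^2(x\sqrt{i})\bigr)=4\arctan\sqrt{y_0}\,\arctanh\sqrt{y_0}$ with $y_0=(\sqrt{1+x^4}-1)/x^2$. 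Replacing $x^2$ by $x$ and then $x$ by $x/4$ absorbs the factor $4^{2m+1}$ and converts $y_0$ into the function $y=y(x)$ of \eqref{eq.m4bsgj7}, which is exactly \eqref{eq.main_bin21}.

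For \eqref{eq.main_bin22} and \eqref{eq.main_bin23} I would apply the Euler operator $x\,d/dx$, which sends $x^{2n+1}/(2n+1)^k$ to $x^{2n+1}/(2n+1)^{k-1}$. One application to \eqref{eq.main_bin21} produces the left-hand side of \eqref{eq.main_bin22}; a second application produces the left-hand side of \eqref{eq.main_bin23}. To differentiate the right-hand sides I would reuse the relation $x\,dy/dx = y(1-y^2)/(1+y^2)$ established in the proof of Theorem~\ref{main_thm1}, together with the chain-rule consequences
\[
x\frac{d}{dx}\arctan\sqrt{y}=\frac{\sqrt{y}\,(1-y)}{2(1+y^2)},\qquad x\frac{d}{dx}\arctanh\sqrt{y}=\frac{\sqrt{y}\,(1+y)}{2(1+y^2)},
\]
which follow after cancelling the factor $(1\mp y)$ in $(1-y^2)/(1\mp y)$. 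The Leibniz rule applied to $4\arctan\sqrt{y}\,\arctanh\sqrt{y}$ then yields \eqref{eq.main_bin22} immediately, and a second round of differentiation (this time also differentiating the algebraic prefactor $2\sqrt{y}(1-y^2)/(1+y^2)$) will deliver \eqref{eq.main_bin23}.

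The main obstacle is purely computational: the second differentiation generates several rational expressions in $y$ that must be combined over the common denominator $(1+y^2)^3$ and then coaxed into the factored forms $(1+y)(1-y)^2(y^2+4y+1)$ and $(1-y)(1+y)^2(y^2-4y+1)$ appearing in \eqref{eq.main_bin23}. A useful organising principle is the substitution $y\mapsto -y$, which (on suitable branches) swaps $\arctan\sqrt{y}$ with $i\,\arctanh\sqrt{y}$ and so pairs the coefficients of the arctan and arctanh terms, making the target factorisations easy to anticipate before the algebra is carried out.
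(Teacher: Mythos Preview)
Your proposal is correct and follows essentially the same route as the paper's own proof: substitute $x\sqrt{i}$ into Lehmer's identity \eqref{eq.niu3t37}, take the imaginary part via $\Im(f^2)=2\,\Re f\,\Im f$ together with \eqref{eq.h3h5tiw}--\eqref{eq.lebm8kb} to obtain \eqref{eq.main_bin21}, and then differentiate (using $x\,dy/dx=y(1-y^2)/(1+y^2)$) to reach \eqref{eq.main_bin22} and \eqref{eq.main_bin23}. The paper is terser---it simply says ``by differentiation''---whereas you spell out the Euler operator, the chain-rule derivatives of $\arctan\sqrt{y}$ and $\arctanh\sqrt{y}$, and the $y\mapsto -y$ symmetry heuristic, but the underlying argument is the same.
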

\begin{proof}
Use exactly the same ideas as in Theorem \ref{main_thm1}. Write $x\sqrt i$ for $x$ in \eqref{eq.niu3t37}
and take the imaginary part using \eqref{eq.h3h5tiw} and \eqref{eq.lebm8kb} and the fact that $\Im (f^2)=2\Re f\Im f$ for arbitrary $f$.
This yields \eqref{eq.main_bin21}. The other two identities follow  from \eqref{eq.main_bin21} by differentiation.
\end{proof}

As particular examples we have the next evaluations:
\begin{align*}
&\sum_{n = 0}^\infty  \frac{(- 9)^{n}}{(2n+1)^2}  \binom {4n+2}{2n+1}^{- 1} = \frac{\pi}{9} \ln\big(2+\sqrt{3}\big),\\
&\sum_{n = 0}^\infty  \frac{(-\frac 13)^{n}}{(2n+1)^2} \binom {4n+2}{2n+1}^{- 1}
= \frac{\pi}{4\sqrt{3}} \ln3,\\
&\sum_{n = 0}^\infty  \frac{(- 9)^{n}}{2n+1} \binom {4n+2}{2n+1}^{- 1}
= \frac{2\pi}{15\sqrt{3}} + \frac{1}{5\sqrt{3}} \ln\big(2+\sqrt{3}\big),\\
&\sum_{n = 0}^\infty \frac{(-\frac 13)^{n}}{2n+1}  \binom{4n+2}{2n+1}^{- 1}
= \frac{\pi}{7\sqrt{3}} + \frac{3}{14} \ln3,\\
&\sum_{n = 0}^\infty (- 9)^{n} \binom {4n+2}{2n+1}^{- 1}
= \frac{4}{25} + \frac{22\pi}{375\sqrt{3}} - \frac{4}{125\sqrt{3}} \ln\big(2+\sqrt{3}\big),
\end{align*}
and
\begin{equation*}
\sum_{n = 0}^\infty \left(-\frac{ 1}{3}\right)^n   \binom{4n+2}{2n+1}^{- 1}
=  \frac{12}{49} + \frac{9\sqrt3\,\pi}{343} + \frac{30\ln3}{343}.
\end{equation*}

Setting $y=\tan(\theta/2)$, we obtain the trigonometric versions of identities \eqref{eq.main_bin21}--\eqref{eq.main_bin23}.
\begin{proposition}\label{prop.onqzlvl}
If $|\tan\theta|\le 1$, then
\begin{align*}
\sum_{n = 0}^\infty & {\frac{{( - 16)^n }}{{(2n + 1)^2 }}\binom{4n + 2}{2n + 1}^{ - 1} \tan ^{2n + 1} \theta }  =  \arctanh \sqrt {\tan\frac{\theta}{2}}\,\arctan \sqrt {\tan\frac{\theta}{2}},\\
\sum_{n = 0}^\infty &\frac{( - 16)^n }{2n + 1}\binom{4n + 2}{2n + 1}^{ - 1} \tan ^{2n + 1} \theta \\
 &= \frac14\sqrt {\tan\frac{\theta}{2}} \left((1-\sin \theta  + \cos \theta)\arctanh \sqrt {\tan\frac{\theta}{2}} + (1+ \sin \theta  + \cos \theta)\arctan \sqrt {\tan\frac{\theta}{2}}\right)\!,
\end{align*}
and
\begin{equation*}
\begin{split}
\sum_{n = 0}^\infty   ( - 16)^n &  \binom{4n + 2}{2n + 1}^{ - 1} \tan ^{2n + 1} \theta \\
&=   \frac{\sin 2 \theta}{8}- \frac{\cos \theta}{8}\big( {\sin 2\theta - \cos 2\theta + \sin \theta  - \cos \theta } \big) \sqrt {\tan\frac{\theta}{2}} \arctanh \sqrt {\tan\frac{\theta}{2}} \\
&\quad+ \frac{\cos \theta}{8}\big( {\sin 2\theta + \cos 2\theta + \sin \theta  + \cos \theta } \big) \sqrt {\tan\frac{\theta}{2}} \arctan \sqrt {\tan\frac{\theta}{2}}.
\end{split}
\end{equation*}
\end{proposition}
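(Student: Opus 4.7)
The plan is to apply the substitution $y = \tan(\theta/2)$ used to derive Proposition \ref{prop.nrbtka2} from Theorem \ref{main_thm1}, but now to the three identities of Theorem \ref{main_thm2}. Inverting \eqref{eq.m4bsgj7} gives $x = 8y/(1-y^2)$, and the tangent double-angle formula $\tan\theta = 2\tan(\theta/2)/(1-\tan^2(\theta/2))$ then yields $x = 4\tan\theta$. Hence $x^{2n+1} = 4\cdot 16^n \tan^{2n+1}\theta$, and after dividing each of \eqref{eq.main_bin21}--\eqref{eq.main_bin23} by $4$, the left-hand sides take the form $\sum_{n=0}^\infty (-16)^n(\cdots)\tan^{2n+1}\theta$ demanded in the proposition.

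For \eqref{eq.main_bin21}, the right-hand side is already a product of two inverse tangents and the first claim follows at once. For \eqref{eq.main_bin22}, I would combine the bracket $\frac{\arctanh\sqrt{y}}{1+y} + \frac{\arctan\sqrt{y}}{1-y}$ over the common denominator $1-y^2$, and then use the elementary identities
\begin{equation*}
\frac{2(1-y)}{1+y^2} = 1 - \sin\theta + \cos\theta, \qquad \frac{2(1+y)}{1+y^2} = 1 + \sin\theta + \cos\theta,
\end{equation*}
which follow immediately from $1+\cos\theta = 2/(1+y^2)$ and $\sin\theta = 2y/(1+y^2)$, to convert the prefactor into the claimed trigonometric form.

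For \eqref{eq.main_bin23} the algebra is longer but still routine. The constant term $\frac{2y(1-y^2)}{(1+y^2)^2}$ equals $\frac{\sin 2\theta}{2}$, giving $\frac{\sin 2\theta}{8}$ after the division by $4$. For the coefficients of $\sqrt{y}\arctan\sqrt{y}$ and $\sqrt{y}\arctanh\sqrt{y}$, one computes $\cos 2\theta = (1-6y^2+y^4)/(1+y^2)^2$ and verifies
\begin{equation*}
\sin 2\theta \pm \cos 2\theta + \sin\theta \pm \cos\theta = \pm\frac{2(1\mp y)(1\pm 4y+y^2)}{(1+y^2)^2},
\end{equation*}
which rests on the factorizations $1+3y-3y^2-y^3 = (1-y)(1+4y+y^2)$ and $1-3y-3y^2+y^3 = (1+y)(1-4y+y^2)$. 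Multiplying each side by $\cos\theta = (1-y)(1+y)/(1+y^2)$ and dividing by $8$ reproduces exactly the coefficients produced by \eqref{eq.main_bin23} after that identity has also been divided by $4$.

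The main obstacle is pure bookkeeping: one must correctly track the simultaneous sign flips in $(1\pm y)$, $(1\pm 4y+y^2)$, and $\cos 2\theta$ so that the arctangent coefficient survives with a $+$ sign while the arctangent-hyperbolic coefficient picks up the $-$ from the minus-case of the displayed identity above. Once those signs are aligned and the two polynomial factorizations are recognized, the whole derivation collapses into mechanical substitution.
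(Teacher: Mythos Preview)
Your proposal is correct and follows exactly the route taken in the paper: substitute $y=\tan(\theta/2)$ into the three identities \eqref{eq.main_bin21}--\eqref{eq.main_bin23} of Theorem~\ref{main_thm2}, which amounts to putting $x=4\tan\theta$, and then express the resulting rational functions of $y$ via the Weierstrass half-angle formulas. You have in fact supplied more detail than the paper does---the factorizations $1\pm 3y-3y^2\mp y^3=(1\mp y)(1\pm 4y+y^2)$ and the trigonometric bookkeeping are exactly what is needed---so nothing is missing.
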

Evaluation in Proposition \ref{prop.onqzlvl} at $\theta=\pi/4$ and $\theta=\pi/6$ gives the next set of results.
\begin{corollary} We have
\begin{align*}
&\sum_{n = 0}^\infty  {\frac{{( - 16)^n}}{{(2n + 1)^2 }}\binom{4n + 2}{2n + 1}^{ - 1} }  = \frac12\ln\Big( \sqrt 2  + 1 + \sqrt {2(\sqrt 2  + 1)}  \Big)\arctan\sqrt {\sqrt 2  - 1}  ,\\
&\sum_{n = 0}^\infty  {\frac{{( - 16)^n }}{2n + 1}\binom{4n + 2}{2n + 1}^{ - 1} } = \frac{\sqrt {\sqrt 2  - 1}}{4} \arctanh \sqrt {\sqrt 2  - 1} + \frac{\sqrt {\sqrt 2  + 1}}{4} \arctan \sqrt {\sqrt 2  - 1},\\
%\end{align*}
%and
%\begin{align*}
&\sum_{n = 0}^\infty  ( - 16)^n \binom{4n + 2}{2n + 1}^{ - 1} \\
 &\qquad\qquad=\frac18  - \frac{\sqrt {2(\sqrt 2  - 1)}}{16} \arctanh \sqrt {\sqrt 2  - 1}+ \frac{\sqrt {2(\sqrt 2  + 1)}}{16} \arctan \sqrt {\sqrt 2  - 1},
\end{align*}
and
%Evaluation at $\theta=\pi/6$ in Proposition \ref{prop.onqzlvl} yields the following series:
\begin{align*}
&\sum_{n = 0}^\infty  \frac{( -\frac{16}{3})^n }{(2n + 1)^2 }\binom{4n + 2}{2n + 1}^{ - 1}   = \sqrt3\arctanh\sqrt{2-\sqrt3}\,\arctan\sqrt{2-\sqrt3},\\
&\sum_{n = 0}^\infty  \frac{( - \frac{16}{3})^n }{2n + 1}\binom{4n + 2}{2n + 1}^{ - 1}   = \frac{\sqrt6}{8}\Big(\arctanh\sqrt{2-\sqrt3}+\sqrt3\,\arctan\sqrt{2-\sqrt3}\Big),\\
&\sum_{n = 0}^\infty  \left(-\frac{16}{3}\right)^{n}\binom{4n + 2}{2n + 1}^{ - 1}   = \frac{3}{16}\Big(1 + \sqrt2\arctan\sqrt{2-\sqrt3}\Big).
\end{align*}
\end{corollary}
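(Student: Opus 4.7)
The plan is to substitute $\theta=\pi/4$ and $\theta=\pi/6$ directly into the three identities of Proposition~\ref{prop.onqzlvl} and simplify. The half-angle tangents follow from $\tan(\theta/2)=\sin\theta/(1+\cos\theta)$: $\tan(\pi/8)=\sqrt{2}-1$ and $\tan(\pi/12)=2-\sqrt{3}$. For $\theta=\pi/4$, since $\tan\theta=1$, the factor $\tan^{2n+1}\theta$ equals $1$, so the left-hand sides become the stated series directly. For $\theta=\pi/6$, $\tan^{2n+1}\theta=3^{-n}/\sqrt{3}$, so I multiply both sides by $\sqrt{3}$ to convert the left-hand side into $\sum(-16/3)^n\binom{4n+2}{2n+1}^{-1}$ and its two analogues.

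The right-hand sides then reduce to evaluating the coefficient combinations $1\pm\sin\theta\pm\cos\theta$ and $\sin 2\theta\pm\cos 2\theta\pm\sin\theta\pm\cos\theta$ with the signs exactly as in Proposition~\ref{prop.onqzlvl}. At $\theta=\pi/4$ one finds $1-\sin\theta+\cos\theta=1$, $1+\sin\theta+\cos\theta=1+\sqrt{2}$, $\sin 2\theta\pm\cos 2\theta\pm\sin\theta\pm\cos\theta\in\{1,\,1+\sqrt{2}\}$ after the relevant cancellations. At $\theta=\pi/6$ a key observation is that $\sin 2\theta-\cos 2\theta+\sin\theta-\cos\theta=0$, which kills the $\arctanh$ term in the third identity and explains its absence from the corollary.

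The only nontrivial bookkeeping lies in collapsing the composite surds. I would handle these by squaring: from $[(1+\sqrt{2})\sqrt{\sqrt{2}-1}]^2=(3+2\sqrt{2})(\sqrt{2}-1)=\sqrt{2}+1$ one gets $(1+\sqrt{2})\sqrt{\sqrt{2}-1}=\sqrt{\sqrt{2}+1}$, and from $[\sqrt{3}(1+\sqrt{3})\sqrt{2-\sqrt{3}}]^2=6(2+\sqrt{3})(2-\sqrt{3})=6$ one gets $\sqrt{3}(1+\sqrt{3})\sqrt{2-\sqrt{3}}=\sqrt{6}$. These two identities produce the stated prefactors $\sqrt{\sqrt{2}+1}/4$, $\sqrt{2(\sqrt{2}+1)}/16$, and $\sqrt{6}/8$. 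For the very first line of the corollary I additionally rewrite $\arctanh\sqrt{\sqrt{2}-1}$ via $\arctanh x=\tfrac{1}{2}\ln\frac{1+x}{1-x}$; rationalizing yields $\frac{1+x}{1-x}=\sqrt{2}+1+(2+\sqrt{2})\sqrt{\sqrt{2}-1}$, and since $[(2+\sqrt{2})\sqrt{\sqrt{2}-1}]^2=(6+4\sqrt{2})(\sqrt{2}-1)=2(\sqrt{2}+1)$, this collapses to $\sqrt{2}+1+\sqrt{2(\sqrt{2}+1)}$, giving the logarithm in the corollary.

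The main obstacle is purely this bookkeeping of nested radicals; no new analytic input is required beyond Proposition~\ref{prop.onqzlvl}.
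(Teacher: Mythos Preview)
Your approach is exactly the one the paper uses: the corollary is stated immediately after Proposition~\ref{prop.onqzlvl} with the single remark that it follows by evaluating that proposition at $\theta=\pi/4$ and $\theta=\pi/6$. Your proposal simply fills in the arithmetic and radical simplifications that the paper leaves implicit, and those computations are all correct.
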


In the next theorem we derive some binomial series involving Fibonacci and Lucas numbers.
\begin{theorem} \label{Th12}
Let the constants $A$ and $B$ be defined by
\begin{equation*}
A=\sqrt{\frac{\sqrt{70+2\sqrt5}-8}{1+\sqrt5}}\qquad\mbox{and} \qquad B=\sqrt{\frac{\sqrt{70-2\sqrt5}-8}{1-\sqrt5}}.
\end{equation*}
Then, for any integer $s$,  we have
\begin{align}
\label{Th12_1}
\sum_{n = 0}^\infty  \frac{( - 1)^n L_{2n+s}}{(2n+1)^2}\binom{4n + 2}{2n + 1}^{ - 1}= 4\left(\alpha^{s-1}\arctan A\arctanh A + \beta^{s-1}\arctan B\arctanh B\right)\!,
\end{align}
\begin{equation}
\label{Th12_2}
\begin{split}
&\sum_{n = 0}^\infty  \frac{( - 1)^n F_{2n+s}}{(2n+1)^2}\binom{4n + 2}{2n + 1}^{ - 1}\\
&\qquad\qquad\qquad= \frac{4}{\sqrt5}\left(\alpha^{s-1}\arctan A\arctanh A- 
\beta^{s-1}\arctan B\arctanh B\right)\!,
\end{split}
\end{equation}
\begin{align}
&\sum_{n = 0}^\infty  \frac{( - 1)^n L_{2n+s}}{2n+1}\binom{4n + 2}{2n + 1}^{ - 1}\nonumber \\
&\quad= 2A\alpha^{s-1}\frac{1-A^4}{1+A^4}\left(\frac{\arctanh A}{1+A^2}+ \frac{\arctan A}{1-A^2}\right)
+ 2B\beta^{s-1}\frac{1-B^4}{1+B^4}\left(\frac{\arctanh B}{1+B^2}+ \frac{\arctan B}{1-B^2}\right)\!,\nonumber\\
&\sum_{n = 0}^\infty  \frac{( - 1)^n F_{2n+s}}{2n+1}\binom{4n + 2}{2n + 1}^{ - 1}\nonumber \\
&\quad= \frac{2A\alpha^{s-1}}{\sqrt5}\frac{1-A^4}{1+A^4}\left(\frac{\arctanh A}{1+A^2}+ \frac{\arctan A}{1-A^2}\right)
-\frac{2B\beta^{s-1}}{\sqrt5}\frac{1-B^4}{1+B^4}\left(\frac{\arctanh B}{1+B^2}+ \frac{\arctan B}{1-B^2}\right)\!,\nonumber
\end{align}
and
\begin{align*}
\sum_{n = 0}^\infty ( - 1)^n \binom{4n + 2}{2n + 1}^{ - 1}L_{2n+s}&= \frac{2A^2(1-A^4)}{(1+A^4)^2}\alpha^{s-1} + \frac{2B^2(1-B^4)}{(1+B^4)^2}\beta^{s-1}\\
&\quad+ A\alpha^{s-1}\frac{(1+A^2)(1-A^2)^2(A^4+4A^2+1)}{(1+A^4)^3}\arctan A \\
&\quad+ B\beta^{s-1}\frac{(1+B^2)(1-B^2)^2(B^4+4B^2+1)}{(1+B^4)^3}\arctan B\\
&\quad+ A\alpha^{s-1}\frac{(1-A^2)(1+A^2)^2(A^4-4A^2+1)}{(1+A^4)^3}\arctanh A \\
&\quad+ B\beta^{s-1}\frac{(1-B^2)(1+B^2)^2(B^4-4B^2+1)}{(1+B^4)^3}\arctanh B,\\
\sqrt5 \sum_{n = 0}^\infty  ( - 1)^n \binom{4n + 2}{2n + 1}^{ - 1}F_{2n+s}&=\frac{2A^2(1-A^4)}{(1+A^4)^2}\alpha^{s-1}  - \frac{2B^2(1-B^4)}{(1+B^4)^2}\beta^{s-1}\\
&\quad+ A\alpha^{s-1}\frac{(1+A^2)(1-A^2)^2(A^4+4A^2+1)}{(1+A^4)^3}\arctan A \\
&\quad- B\beta^{s-1}\frac{(1+B^2)(1-B^2)^2(B^4+4B^2+1)}{(1+B^4)^3}\arctan B\\
&\quad+ A\alpha^{s-1}\frac{(1-A^2)(1+A^2)^2(A^4-4A^2+1)}{(1+A^4)^3}\arctanh A \\
&\quad- B\beta^{s-1}\frac{(1-B^2)(1+B^2)^2(B^4-4B^2+1)}{(1+B^4)^3}\arctanh B.
\end{align*}
\end{theorem}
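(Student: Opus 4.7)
The strategy is to reduce Theorem \ref{Th12} to Theorem \ref{main_thm2} by splitting $L_{2n+s}$ and $F_{2n+s}$ via Binet's formulas. Writing
\[
L_{2n+s} = \alpha^{s-1}\alpha^{2n+1} + \beta^{s-1}\beta^{2n+1}, \qquad \sqrt5\,F_{2n+s} = \alpha^{s-1}\alpha^{2n+1} - \beta^{s-1}\beta^{2n+1},
\]
each of the six series decomposes into two series of the shape treated in Theorem \ref{main_thm2}, with the specialisations $x=\alpha$ and $x=\beta$. Since $|\alpha|,|\beta|<4$, the convergence hypothesis is met in both cases.

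The first task is to identify the constants of the statement as $A=\sqrt{y(\alpha)}$ and $B=\sqrt{y(\beta)}$, where $y$ is defined by \eqref{eq.m4bsgj7}. Using $\alpha^2=\alpha+1$, one computes $\alpha^2+16=\alpha+17=(35+\sqrt5)/2$, so that $\sqrt{\alpha^2+16}=\tfrac12\sqrt{70+2\sqrt5}$. Substituting into \eqref{eq.m4bsgj7} and clearing the factor $\alpha=(1+\sqrt5)/2$ yields $y(\alpha)=A^2$. The analogous calculation with $\beta^2=\beta+1$ gives $y(\beta)=B^2$; note that $\beta<0$ makes $B^2$ negative, so $B$ is purely imaginary.

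With these identifications in hand, applying \eqref{eq.main_bin21} with $x=\alpha$ gives $4\arctan A\arctanh A$ for the $\alpha$-piece of the $L_{2n+s}/(2n+1)^2$ sum, while $x=\beta$ yields $4\arctan B\arctanh B$. Weighting these by $\alpha^{s-1}$ and $\beta^{s-1}$ produces \eqref{Th12_1}; replacing the $+$ by $-$ and dividing by $\sqrt5$ gives the Fibonacci companion \eqref{Th12_2}. The remaining four identities follow in exactly the same fashion from \eqref{eq.main_bin22} and \eqref{eq.main_bin23}: their right-hand sides are rational functions of $y$ and $\sqrt y$ multiplied by $\arctan\sqrt y$ and $\arctanh\sqrt y$, so the substitutions $y=A^2$ and $y=B^2$ produce the closed forms displayed, after the linear combinations $\alpha^{s-1}(\cdots)+\beta^{s-1}(\cdots)$ and $\alpha^{s-1}(\cdots)-\beta^{s-1}(\cdots)$.

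The main subtlety is that $B$ is imaginary, so $\arctan B$ and $\arctanh B$ are complex; however, the identities $\arctan(iz)=i\arctanh z$ and $\arctanh(iz)=i\arctan z$ ensure that the products $\arctan B\arctanh B$, and the polynomial-in-$B$ coefficients appearing in the other identities, ultimately yield real values. Alternatively, as emphasised in the Remark following Theorem \ref{main_thm1}, Theorem \ref{main_thm2} extends verbatim to complex arguments, so no re-derivation is required in the purely imaginary case. The only non-routine step is the nested-radical verification $A^2=y(\alpha)$, $B^2=y(\beta)$; everything else is bookkeeping with Binet's formula.
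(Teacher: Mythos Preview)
Your proposal is correct and follows essentially the same route as the paper: substitute $x=\alpha$ and $x=\beta$ into the three identities of Theorem~\ref{main_thm2}, identify $y(\alpha)=A^2$ and $y(\beta)=B^2$, and combine via Binet's formulas. You supply more detail than the paper does---in particular the explicit verification of the nested-radical identity $A^2=y(\alpha)$ and the observation that $B$ is purely imaginary---but the underlying argument is identical.
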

\begin{proof}
Use $x=\alpha$ and $x=\beta$ in \eqref{eq.main_bin21}, in view of $y(\alpha)=A^2$ and $y(\beta)=B^2$, produce
\begin{equation}\label{1}
\sum_{n = 0}^\infty  \frac{(- 1)^n}{(2n+1)^2}  \binom{4n+2}{2n+1}^{- 1}\alpha^{2n+s} =
4\alpha^{s-1} \arctanh A\,\arctan A,
\end{equation}
and
\begin{equation}\label{2}
\sum_{n = 0}^\infty  \frac{(- 1)^n}{(2n+1)^2}  \binom{4n+2}{2n+1}^{- 1}\beta^{2n+s} =
4\beta^{s-1} \arctanh B\,\arctan B.
\end{equation}
Addition of \eqref{1} and \eqref{2} yields \eqref{Th12_1} while their difference gives \eqref{Th12_2}.
The other formulas are proven in the same manner; hence, the proofs are omitted.
\end{proof}
\begin{example}
For $s\in\{0,1,2\}$, from first and second formulas of Theorem \ref{Th12} we have the following series list:
\begin{align*}
&\sum_{n = 0}^\infty  \frac{( - 1)^{n+1}}{(2n+1)^2} \binom{4n + 2}{2n + 1}^{ - 1} L_{2n}= 4\big(\beta\arctan A\arctanh A +\alpha\arctan B\arctanh B\big),\\
&\sum_{n = 0}^\infty  \frac{( - 1)^{n+1}}{(2n+1)^2} \binom{4n + 2}{2n + 1}^{ - 1} F_{2n} =\frac{4}{\sqrt5}\big(\beta\arctan A\arctanh A -\alpha\arctan B\arctanh B\big),\\
%\end{align*}
%%
%\begin{align*}
&\sum_{n = 0}^\infty  \frac{( - 1)^{n}}{(2n+1)^2} \binom{4n + 2}{2n + 1}^{ - 1} L_{2n+1}= 4\big(\arctan A\arctanh A +\arctan B\arctanh B\big),\\
&\sum_{n = 0}^\infty  \frac{( - 1)^{n}}{(2n+1)^2} \binom{4n + 2}{2n + 1}^{ - 1} F_{2n+1} =\frac{4}{\sqrt5}\big(\arctan A\arctanh A -\arctan B\arctanh B\big),
\end{align*}
and
\begin{align*}
&\sum_{n = 0}^\infty\frac{( - 1)^{n}}{(2n+1)^2} \binom{4n + 2}{2n + 1}^{ - 1} L_{2n+2}= 4\big(\alpha\arctan A\arctanh A +\beta\arctan B\arctanh B\big),\\
&\sum_{n = 0}^\infty\frac{( - 1)^n}{(2n+1)^2} \binom{4n + 2}{2n + 1}^{ - 1} F_{2n+2} =\frac{4}{\sqrt5}\big(\alpha\arctan A\arctanh A -\beta\arctan B\arctanh B\big).
\end{align*}
\end{example}

\section{Other results: series involving Catalan numbers and second order odd harmonic numbers}

In this section we will work with the following series (\cite[page 262/263]{berndt},\cite{borwein})
\begin{equation}\label{cube_arcsin}
\sum_{n = 0}^\infty \frac{{2n + 1}}{{2n + 3}} \frac{C_n O_n^{(2)}}{4^{n} } x^{2n + 3} = \frac{1}{3}\arcsin^3 x, \quad |x|\leq 1,
\end{equation}
where $C_k = \frac{1}{{k + 1}}\binom{2k}{k}$ are the Catalan numbers and
\begin{equation*}
O_k^{(2)} = \sum_{j = 0}^k \frac{1}{(2j + 1)^2},\quad k\ge 0,
\end{equation*}
are the second order odd harmonic numbers.

\begin{theorem}\label{main_thm3}
	For all $x\in\mathbb{C}$ with $|x|\le 1/4$, the following series identity holds true
	%\begin{align*}%\label{eq.main_Catalan1}
	%&\sum_{n = 0}^\infty (- 1)^{n+1}  \frac{4n+1}{4n+3} O_{2n}^{(2)} C_{2n} x^{2n} = \frac{\sqrt2}{48\sqrt{x^3}}\Big (
	%\arctan^3 {\sqrt z } + \arctanh^3 {\sqrt z } \nonumber \\
	%&\qquad\qquad - 3 \arctan {\sqrt z } \arctanh {\sqrt z }  \big ( \arctan {\sqrt z }
	%+ \arctanh {\sqrt z } \big ) \Big ),
	%\end{align*}
	%
	\begin{align*}%\label{1eq.main_Catalan1}
	\sum_{n = 0}^\infty &(- 1)^{n+1} \frac{4n+1}{4n+3} O_{2n}^{(2)} C_{2n} x^{2n}\\
	& = \frac{ \arctan {\sqrt z } + \arctanh {\sqrt z } }{24\sqrt{2x^3}}
	\Big ( \arctan^2 {\sqrt z } -4 \arctan {\sqrt z } \arctanh {\sqrt z }+\arctanh^2 {\sqrt z }\Big ),
	\end{align*}
	where
	\begin{equation*}%\label{def_z}
	z = z(x) = \frac{\sqrt{1+16x^2} - 1}{4x}.
	\end{equation*}
	In addition, we have for all $x\in\mathbb{C}$ with $|x|\le 1/4$
	\begin{align*}%\label{eq.main_Catalan2}
	\sum_{n = 0}^\infty &(- 1)^{n+1}  \frac{4n+3}{4n+5} O_{2n}^{(2)} C_{2n} x^{2n}\\
	& = \frac{ \arctan {\sqrt z } - \arctanh {\sqrt z } }{24\sqrt{2x^5}}
	\Big ( \arctan^2 {\sqrt z } +4 \arctan {\sqrt z } \arctanh {\sqrt z }+\arctanh^2 {\sqrt z }\Big ).
	\end{align*}
%	with $z(x)$ defined in \eqref{def_z}.
\end{theorem}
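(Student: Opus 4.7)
The plan is to mirror the strategy of Theorems~\ref{main_thm1} and~\ref{main_thm2}, but starting from the arcsine-cubed generating function~\eqref{cube_arcsin} instead of the arcsine-squared identity~\eqref{eq.niu3t37}. Substituting $t=2\sqrt{x}\sqrt{i}$ in~\eqref{cube_arcsin} and using $t^{2}=4ix$ together with $t^{3}=4\sqrt{2}(-1+i)\,x^{3/2}$, the right-hand side rewrites as
\[
4\sqrt{2}(-1+i)\,x^{3/2}\sum_{n\ge 0}\frac{2n+1}{2n+3}C_{n}O_{n}^{(2)}(ix)^{n},
\]
and bisecting the inner series by the parity of $n$ produces two real series: one whose coefficients involve $C_{2m}O_{2m}^{(2)}$ (from $n=2m$) with rational prefactor $\frac{4m+1}{4m+3}$ and power $x^{2m}$, and one whose coefficients involve $C_{2m+1}O_{2m+1}^{(2)}$ (from $n=2m+1$) with rational prefactor $\frac{4m+3}{4m+5}$ and power $x^{2m+1}$.

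Next, by~\eqref{eq.h3h5tiw}--\eqref{eq.lebm8kb} applied with $2\sqrt{x}$ in place of $x$, one has $\arcsin(2\sqrt{x}\sqrt{i})=a+ib$, where $a=\arctan\sqrt{z}$ and $b=\arctanh\sqrt{z}$ with $z=(\sqrt{1+16x^{2}}-1)/(4x)$; cubing gives
\[
\arcsin^{3}(2\sqrt{x}\sqrt{i})=(a+ib)^{3}=a(a^{2}-3b^{2})+ib(3a^{2}-b^{2}).
\]
Equating real parts and imaginary parts of the two sides of the transformed~\eqref{cube_arcsin} yields a $2\times 2$ linear system whose unknowns are the two bisected real series, with right-hand sides $a(a^{2}-3b^{2})$ and $b(3a^{2}-b^{2})$. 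Solving this system isolates each series in closed form.

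The closing algebraic step is to recognise the factorisations
\begin{align*}
-a(a^{2}-3b^{2})+b(3a^{2}-b^{2})&=-(a+b)(a^{2}-4ab+b^{2}),\\
-a(a^{2}-3b^{2})-b(3a^{2}-b^{2})&=-(a-b)(a^{2}+4ab+b^{2}),
\end{align*}
which produce precisely the cubic brackets appearing in the theorem. Substituting back $a=\arctan\sqrt{z}$ and $b=\arctanh\sqrt{z}$ and pulling out the residual power of $x$ (an overall $x^{3/2}$ in the first equation, becoming $x^{5/2}$ in the second after the extra factor of $x$ produced by the odd-$n$ bisection is divided out) converts the denominators $x^{3/2}$ and $x^{5/2}$ into $\sqrt{2x^{3}}$ and $\sqrt{2x^{5}}$, matching the two right-hand sides of the theorem exactly.

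The main obstacle is the careful bookkeeping of the fractional powers of $i$ (in particular $\sqrt{i}=(1+i)/\sqrt{2}$ and $i^{3/2}=(-1+i)/\sqrt{2}$), of the $(-1)^{m}$ signs arising from the bisection, and of the overall sign flip needed to align the natural bisection sums with the theorem's $(-1)^{n+1}$ convention; once these are tracked, the remainder is elementary polynomial algebra in $a$ and $b$.
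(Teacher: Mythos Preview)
Your proposal is correct and follows essentially the same route as the paper: substitute a $\sqrt{i}$-rotated argument into the $\arcsin^{3}$ generating function~\eqref{cube_arcsin}, bisect the resulting series by parity, and separate real and imaginary parts using~\eqref{eq.h3h5tiw}--\eqref{eq.lebm8kb}. The only cosmetic difference is that the paper first replaces $x$ by $x\sqrt{i}$ and then performs $x\mapsto 2\sqrt{x}$ at the end, whereas you merge both into the single substitution $t=2\sqrt{x}\sqrt{i}$; you also spell out the factorisations $-a(a^{2}-3b^{2})\pm b(3a^{2}-b^{2})=-(a\pm b)(a^{2}\mp 4ab+b^{2})$, which the paper leaves implicit under ``simplify''. (Minor slip: what you call ``the right-hand side'' after the substitution is actually the series side, i.e.\ the left-hand side of~\eqref{cube_arcsin}.)
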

\begin{proof}
	Starting with \eqref{cube_arcsin} and noting that $(\sqrt{i})^{-3}=-\frac1{\sqrt{2}}(1+i)$ we can write
	\begin{align*}
	\sum_{n = 0}^\infty (- 1)^{n} \frac{4n+1}{4n+3} \frac{O_{2n}^{(2)} C_{2n}}{2^{4n+1}} x^{4n}
	+ i \sum_{n = 0}^\infty (- 1)^{n} \frac{4n+3}{4n+5} \frac{O_{2n+1}^{(2)} C_{2n+1} }{2^{4n+3}} x^{4n+2} = g(x) f^3(x)
	\end{align*}
	with
	\begin{equation*}
	g(x) = - \frac{1+i}{6 \sqrt{2}}{x^{-3}} \qquad\mbox{and}\qquad f(x) = \arcsin(\sqrt{i} x).
	\end{equation*}
	Next, we take the real and imaginary parts for the product on the right-hand side using
	\begin{equation*}
	\Re f^3 = (\Re f)^3 - 3\Re f(\Im f)^2
	\end{equation*}
	and
	\begin{equation*}
	\Im f^3 = 3(\Re f)^2 \Im f - (\Im f)^3.
	\end{equation*}
	This yields
	\begin{equation*}
	\sum_{n = 0}^\infty (- 1)^{n}   \frac{4n+1}{4n+3} \frac{O_{2n}^{(2)} C_{2n}}{16^{n}} \,x^{4n}
	= - \frac{\sqrt{2}}{6  x^3}\left ( (\Re f)^3 + (\Im f)^3 - 3\, \Re f\,\Im f(\Re f + \Im f)\right )
	\end{equation*}
	and
	\begin{equation*}
	\sum_{n = 0}^\infty (- 1)^{n}  \frac{4n+3}{4n+5} \frac{O_{2n+1}^{(2)} C_{2n+1}}{16^{n}} \, x^{4n}
	= - \frac{2\sqrt{2}}{3  x^5}\left ( (\Re f)^3 - (\Im f)^3 + 3\, \Re f\,\Im f(\Re f - \Im f)\right)\!.
	\end{equation*}
	Make the transformation $x \mapsto 2\sqrt{x}$ and simplify while using \eqref{eq.h3h5tiw} and \eqref{eq.lebm8kb}.
\end{proof}

As particular examples of Theorem \ref{main_thm3} we offer
\begin{align*}
&\sum_{n = 0}^\infty (- 1)^{n+1}  \frac{4n+1}{4n+3} \frac{O_{2n}^{(2)} C_{2n}}{16^n}\\
&\qquad = \frac{\sqrt{2}}{6}
\big(\arctan\lambda  + \arctanh\lambda\big)
\big(\arctan^2\lambda-4\arctan\lambda\arctanh\lambda+\arctanh^2\lambda\big),\\
%\end{align*}
%\begin{align}
&\sum_{n = 0}^\infty (- 1)^{n+1}  \frac{4n+3}{4n+5} \frac{O_{2n+1}^{(2)} C_{2n+1}}{16^n}\\
&\qquad = \frac{2\sqrt{2}}{3}
\big(\arctan\lambda-\arctanh\lambda\big)\big(\arctan^2\lambda+4\arctan\lambda\arctanh\lambda+\arctanh^2\lambda\big),
\end{align*}
where $\lambda=\sqrt{\sqrt{2}-1}$, and
\begin{align*}
&\sum_{n = 0}^\infty (- 1)^{n+1}  \frac{4n+1}{4n+3} \frac{O_{2n}^{(2)} C_{2n}}{64^n} \\
&\qquad= \frac{2}{3}
\big(\arctan\omega  + \arctanh\omega\big)
\big(\arctan^2\omega-4\arctan\omega\arctanh\omega+\arctanh^2\omega\big),\\
%\end{align*}
%\begin{align*}
&\sum_{n = 0}^\infty (- 1)^{n+1}  \frac{4n+3}{4n+5} \frac{O_{2n+1}^{(2)} C_{2n+1}}{64^n}\\
&\quad = \frac{16}{3}
\big(\arctan\omega-\arctanh\omega\big)\big(\arctan^2\omega+4\arctan\omega\arctanh\omega+\arctanh^2\omega\big),
\end{align*}
where $\omega=\sqrt{\sqrt5-2}=\sqrt{-\beta^3}$.

\section{Conclusion}

This paper was motivated by a challenging and mysterious identity stated without a formal proof by Sprugnoli in 2006.
We have shown that his identity is a special case of a much lager class of identities involving $\binom {4n}{2n}$
in the denominators. We have also covered similar series (with and without Fibonacci numbers) and derived various interesting identities.
In our next paper on the topic we will study some related series using the same approach.

\end{document}